\def\obrace{\iftrue{\else}\fi}
\def\cbrace{\iffalse{\else}\fi}
\let\originalparagraph\paragraph
\renewcommand{\paragraph}[2][.]{\originalparagraph{#2#1}}
\newcommand{\zz}{{\mathbb Z}}
\newcommand{\rr}{{\mathbb R}}
\newcommand{\cale}{{\mathcal E}}
\DeclareMathOperator{\sign}{sign}
\newcommand{\call}{{\mathcal L}}
\newcommand{\beq}{\begin{eqnarray*}}
	\newcommand{\feq}{\end{eqnarray*}}
\newcommand{\beqn}{\begin{eqnarray}}
	\newcommand{\feqn}{\end{eqnarray}}
\newcommand{\bes}{\begin{split}}
	\newcommand{\fes}{\end{split}}
\newtheorem{theorem}{Theorem}
\newtheorem*{theorema*}{Theorem~A}
\newtheorem*{conj*}{Conjecture}
\makeatletter \@addtoreset{theorem}{section}\makeatother
\newcommand{\nn}{{\mathbb N}}
\makeatletter \@addtoreset{theorem}{section}\makeatother
\makeatletter \@addtoreset{theorem}{section}\makeatother
\newtheorem{lemma}[theorem]{Lemma}
\newtheorem*{theorem*}{Theorem}
\newcommand{\leqnomode}{\tagsleft@true\let\veqno\@@leqno}
\newcommand{\reqnomode}{\tagsleft@false\let\veqno\@@eqno}
\def\BState{\State\hskip-\ALG@thistlm}
\newlength\myindent
\title{On a characterization of exponential and \\ double exponential distributions}
\author{
	Reza~Rastegar\thanks{Occidental Petroleum Corporation, Houston, TX 77046, USA;  e-mail: reza\_rastegar2@oxy.com}
	\and
	Alexander~Roitershtein \thanks{Dept. of Mathematics, Texas A\&M University, College Station, TX 77843, USA;
		e-mail: roiterst@tamu.edu}
}
\begin{document}
\maketitle
\begin{abstract}
Recently, G.~Yanev \cite{yanev} obtained a characterization of the exponential family of distributions in terms of a functional equation for certain mixture densities. The purpose of this note is twofold: we extend Yanev's theorem by relaxing a restriction on the sign of mixture coefficients and, in addition, obtain a similar characterization for the Laplace family of distributions.
\end{abstract}
	
	\noindent{\em MSC2020: } 62E10; 60G50; 60E10 \\
	\noindent{\em Keywords}: Laplace distribution; exponential distribution; double exponential distribution; hypoexponential distribution; characterization of distributions; sums of independent
	random variables; characteristic functions
	
	\section{Introduction}
	\label{intro}
Our aim is to prove certain characterizations of the exponential and double exponential families of distributions. 
We will use the notation $X\sim \cale(\lambda)$ to indicate that $X$ is an exponential random variable with parameter $\lambda>0,$ that is $P(X>x)=e^{-\lambda x}$ for all $x>0.$ We will write $X\in \cale$ if $X\sim \cale(\lambda)$ for some $\lambda>0.$ Similarly, will write $X\in \call$ if $X$ has a Laplace (double exponential) distribution \cite{chara}, that is, for some $\lambda>0$ and $Y\sim\cale(\lambda),$ 
\beq
P(X>x)=\frac{1}{2}\Big(P(Y>x)+P(-Y>x)\Big)=\frac{\lambda}{2}\int_x^\infty e^{-\lambda |y|}\,dy, \qquad \forall\,x\in\rr.
\feq
For the exponential random variable we have:
	\begin{theorem}
		\label{yate}
		Let $X_1,\ldots, X_n,$ $n\geq 2,$ be independent copies of a random variable $X$ and
		$\mu_1,\ldots,\mu_n$ be distinct non-zero real numbers. Let $\varphi(t)=E\big(e^{itX}\big),$ $t\in\rr,$ be the characteristic function of $X,$ and suppose that $\varphi$ is infinitely differentiable at zero and, furthermore,
		\beqn
		\label{nusa}
		\prod_{k=1}^n \varphi(\mu_k t)=\sum_{k=1}^n \theta_k \varphi(\mu_k t),\qquad t\in\rr,
		\feqn
		where
		\beqn
		\label{theta}
		\theta_k=\prod_{j=1,j\neq k}^n\frac{\mu_k}{\mu_k-\mu_j},\qquad k=1,\ldots,n.
		\feqn
If, in addition, 
		\beqn
		\label{core}
		\sum_{(k_1,\ldots,k_n)\in W_{n,m}}\prod_{j=1}^n \mu_j^{k_j}\neq \sum_{k=1}^n \mu_k^m\quad \mbox{\rm for any integer $m\geq 2,$}
		\feqn
		where
		\beqn\label{wi}
		W_{n,m}:=\big\{(k_1,\ldots,k_n)\in\zz^n: k_j\geq 0~\mbox{\rm and}~\sum_{j=1}^n k_j=m\big\},
		\feqn		
then, either $P(X=0)=1$ or $E(X)\neq 0$ and $X\cdot \sign\big(E(X)\big)\sim \cale(\lambda)$ with $\lambda=1/E(X).$
	\end{theorem}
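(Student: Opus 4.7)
The plan is to expand the functional equation \eqref{nusa} at $t=0$ and recover the moments of $X$ order by order. Since $\varphi$ is infinitely differentiable at the origin, all moments of $X$ exist, and $\varphi(t)=\sum_{m=0}^{M}c_m t^m + o(t^M)$ for every $M\geq 0$, with $c_m=i^m E(X^m)/m!$ and $c_0=1$. Substituting this Taylor expansion into both sides of \eqref{nusa} and equating the coefficients of $t^m$ gives, for every $m\geq 0$,
$$
\sum_{(k_1,\ldots,k_n)\in W_{n,m}}\prod_{j=1}^{n} c_{k_j}\mu_j^{k_j} \,=\, c_m\sum_{k=1}^{n}\theta_k\mu_k^m.
$$
The sum on the right is a complete homogeneous symmetric polynomial in $\mu_1,\ldots,\mu_n$: the partial-fraction identity $\prod_{k=1}^{n}(1-\mu_k s)^{-1}=\sum_{k=1}^{n}\theta_k(1-\mu_k s)^{-1}$, expanded in powers of $s$ and compared coefficient by coefficient, yields $\sum_k\theta_k\mu_k^m=\sum_{(k_1,\ldots,k_n)\in W_{n,m}}\prod_j\mu_j^{k_j}$.

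Next I would isolate the $n$ ``monochromatic'' multi-indices in $W_{n,m}$, i.e., those with a single $k_j=m$ and the rest zero. Using $c_0=1$, these contribute $c_m\sum_k\mu_k^m$ to the left-hand side; subtracting this from the identity above leaves
$$
\sum_{\text{mixed}}\prod_{j=1}^{n}c_{k_j}\mu_j^{k_j} \,=\, c_m\Big(\sum_{(k_1,\ldots,k_n)\in W_{n,m}}\prod_j\mu_j^{k_j} - \sum_{k=1}^{n}\mu_k^m\Big).
$$
Hypothesis \eqref{core} says exactly that the bracketed factor is non-zero for every $m\geq 2$. An induction on $m$ now shows that $c_m=c_1^m$ for all $m\geq 1$: the cases $m=0,1$ are immediate; for $m\geq 2$ every mixed multi-index has each $k_j<m$, so the inductive hypothesis turns $\prod_j c_{k_j}$ into $c_1^{\sum k_j}=c_1^m$, the left-hand side factors as $c_1^m$ times the same bracket, and cancellation via \eqref{core} yields $c_m=c_1^m$.

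Translating back to moments, $E(X^m)=m!\,E(X)^m$ for every $m\geq 1$. If $E(X)=0$, then $E(X^2)=0$, hence $X=0$ almost surely. Otherwise, setting $\mu=E(X)$, the rescaled variable $Y:=X/\mu$ has $E(Y^m)=m!$, the moments of $\cale(1)$; since these moments satisfy Carleman's criterion, they determine the distribution, so $Y\sim\cale(1)$, and therefore $X\cdot\sign(E(X))\sim\cale(1/|E(X)|)$, as claimed. The only genuinely non-routine step is recognising $\sum_k\theta_k\mu_k^m$ as $\sum_{W_{n,m}}\prod_j\mu_j^{k_j}$; the rest is bookkeeping, and \eqref{core} is precisely what one needs to license the cancellation at each step of the induction.
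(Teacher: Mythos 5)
Your proof is correct, and while it shares the paper's basic skeleton --- expand both sides of \eqref{nusa} at $t=0$, match coefficients of $t^m$, and use \eqref{core} to solve for the $m$-th coefficient in terms of the lower ones --- it differs in two substantive ways, both to your advantage. First, the paper establishes the key identity $\sum_k\theta_k\mu_k^m=\sum_{W_{n,m}}\prod_j\mu_j^{k_j}$ (its Lemma~\ref{lama}) probabilistically, by computing $E(S^m)$ for $S=\sum_k\mu_kX_k$ with $X\in\cale$; you obtain the same identity purely algebraically from the partial-fraction expansion of $\prod_k(1-\mu_ks)^{-1}$, i.e.\ by recognising $\sum_k\theta_k\mu_k^m$ as the complete homogeneous symmetric polynomial $h_m(\mu_1,\ldots,\mu_n)$. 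Second, and more importantly, your endgame is cleaner and actually more rigorous: the paper only argues that the recursion determines all $\varphi^{(m)}(0)$ from $\varphi'(0)$ and that the exponential's characteristic function satisfies the same recursion, and then concludes $\varphi=\psi$ by asserting that $\varphi$ is analytic --- a property that does \emph{not} follow from infinite differentiability at zero alone (the paper quietly leans on Cram\'{e}r's condition here). You instead derive the explicit closed form $c_m=c_1^m$, i.e.\ $E(X^m)=m!\,E(X)^m$, and identify the law via Carleman's criterion for the moment sequence $\{m!\}$, which is valid under the stated hypotheses without any analyticity assumption. One cosmetic remark: your conclusion $X\cdot\sign(E(X))\sim\cale(1/|E(X)|)$ is the correct reading of the theorem's $\lambda=1/E(X)$ when $E(X)<0$.
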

	The proof of the theorem is given in Section~\ref{proofs}. Theorem~\ref{yate} is an extension of a similar result of G.~Yanev \cite{yanev} obtained under the additional assumption that the coefficients $\mu_k$ are positive. In that case, the key technical condition \eqref{core} is trivial as the left-hand sides contains the $\mu_k^m$ terms and hence is always larger than the right-hand side.
	\par
	To ensure the existence of the derivatives of $\varphi$ at zero one can impose Cram\'{e}r's condition, namely assume that there is $t_0>0$ such that $E\big(e^{tX}\big) < \infty$ for all $t \in(-t_0, t_0).$ Note also that the equality in \eqref{core} for any fixed $m\in\nn$ describes a low-dimensional manifold in $\rr^n,$ and hence Theorem~\ref{yate} is true for almost every vector $(\mu_1,\ldots,\mu_n)$ chosen at random from a continuous distribution on $\rr^n.$
	\par
	The identity in \eqref{nusa} with $\theta_k$ introduced in \eqref{theta} holds for any $X\in\cale,$ and Theorem~\ref{yate} can be seen
	as a converse to this result. Equations \eqref{nusa} and \eqref{theta} give an expression of the characteristic function of the sum
\beqn
\label{es}
S=\mu_1X_1+\cdots \mu_n X_n
\feqn
as a linear combination of $\varphi(\mu_kt)$'s. If $X\sim \cale(\lambda),$ then $\varphi(t)=\frac{\lambda}{\lambda-it},$ and thus \eqref{nusa} 
is the partial fraction decomposition of the complex-valued rational function $\psi(t):=E\big(e^{itS}\big).$ In the particular case when $X\in\cale$ and $\mu_k=\frac{1}{L-k+1}$ for some integer $L>n,$ the random variable $S/\lambda$ is distributed as the $n$-th order statistic of a sample of $L$ independent copies of $X$ (this is the R\'{e}nyi representation of order statistics; see, for instance, \cite[p.~18]{david}). For further background and earlier versions (particular cases) of Yanev's characterization theorem see \cite{arnold, beluga, yanev}.
\par
It was pointed out in \cite{yanev} that an extension of their result to a more general class of coefficients $\mu_k$ would be of interest from the viewpoint of both theory and applications. When all the coefficients $\mu_k$ are positive and $X$ is an exponential random variable, the random variable $S=\sum_{k=1}^n \mu_k X_k$ has a hypoexponential distribution. When some of the coefficients are negative, $S$ is a difference of two hypoexponential random variables. Some applications of such differences are discussed, for instance, in \cite{li}. An insightful theoretical exploration of the densities of hypoexponential distributions can be fund in \cite{her}. 
\par 
We remark that the theorem is not true if the particular form of the coefficients $\theta_k$ in \eqref{theta} is not enforced.
For instance, for the Laplace distribution we have:
\begin{theorem}
\label{yan}
Let $X_1,\ldots, X_n,$ $n\geq 2,$ be independent copies of a random variable $X$
and $\mu_1,\ldots,\mu_n$ be distinct positive numbers. Let $\varphi(t)=E\big(e^{itX}\big),$ $t\in\rr,$
be the characteristic function of $X,$ and suppose that $\varphi$ is infinitely differentiable at zero and, furthermore, \eqref{nusa} holds with
\beqn
\label{theta}
\theta_k=\prod_{j=1,j\neq k}^n\frac{\mu_k^2}{\mu_k^2-\mu_j^2},\qquad k=1,\ldots,n.
\feqn
Then, either $P(X=0)=1$ or $X$ has a Laplace distribution.
\end{theorem}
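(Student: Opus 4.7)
Since $\varphi$ is $C^\infty$ at zero, all moments $a_m:=E(X^m)$ exist, and $\varphi(t)=\sum_{m\ge 0} i^m a_m t^m/m!$ matches $\varphi$ to arbitrary order at the origin. The plan is to substitute this expansion into \eqref{nusa} and equate coefficients of $t^m$. After canceling $i^m/m!$, this yields, for every $m\ge 1$,
\begin{equation*}
\sum_{(j_1,\dots,j_n)\in W_{n,m}}\binom{m}{j_1,\dots,j_n}\prod_{k=1}^n\mu_k^{j_k}a_{j_k} \;=\; a_m\sum_{k=1}^n\theta_k\mu_k^m.
\end{equation*}
Separating the $n$ tuples in $W_{n,m}$ with a single $j_k$ equal to $m$ from the rest, this rewrites as $(c_m-p_m)\,a_m = R_m(a_1,\dots,a_{m-1})$, where $c_m:=\sum_k\theta_k\mu_k^m$, $p_m:=\sum_k\mu_k^m$, and $R_m$ is a polynomial in the lower moments.

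The identification of $c_m$ comes from the very partial-fraction decomposition that shows Laplace$(\lambda)$ solves \eqref{nusa}:
\begin{equation*}
\prod_{k=1}^n\frac{1}{1-\mu_k^2 u}\;=\;\sum_{k=1}^n\frac{\theta_k}{1-\mu_k^2 u}.
\end{equation*}
Extracting the coefficient of $u^s$ gives $c_{2s}=h_s(\mu_1^2,\dots,\mu_n^2)$, where $h_s$ is the complete homogeneous symmetric polynomial. Since $h_1=p_1$ in the squared variables, $c_2=p_2$; this is the unique degeneracy and is precisely what leaves $a_2$ as a free parameter, matching the Laplace scale. For $s\ge 2$ and distinct positive $\mu_k$, $h_s(\mu_1^2,\dots,\mu_n^2)>\sum_k\mu_k^{2s}$ strictly, because $h_s$ contains all symmetric monomials with positive coefficients while the power sum retains only the pure powers; this settles every even index $m\ge 4$. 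The main obstacle is the odd case, which I would handle by studying the generating function $\sum_m (c_m-p_m)t^m=\sum_k(\theta_k-1)/(1-\mu_k t)$: writing each summand as $(\theta_k-1)(1+\mu_k t)/(1-\mu_k^2 t^2)$ isolates the odd-in-$t$ part as $t\sum_k(\theta_k-1)\mu_k/(1-\mu_k^2 t^2)$, and, reassembling this as a single rational function over the denominator $\prod_k(1-\mu_k^2 t^2)$, I would show its numerator has strictly nonzero Taylor coefficients in every even degree. The argument is by ordering $0<\mu_1<\cdots<\mu_n$ and tracking the alternating signs of the $\prod_{j\ne k}(\mu_k^2-\mu_j^2)$ that appear in the numerator; in the $n=2$ case a direct computation gives the numerator $\mu_1\mu_2\bigl(1-(\mu_1^2+\mu_1\mu_2+\mu_2^2)t^2\bigr)/(\mu_1+\mu_2)$, confirming the claim explicitly.

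Granted these non-degeneracies, the moments are pinned down inductively. The $m=1$ relation gives $a_1=0$. Any tuple in $W_{n,2s+1}$ has an odd number of odd entries, so at least one $j_k$ is odd and strictly less than $2s+1$; hence, assuming $a_1=a_3=\cdots=a_{2s-1}=0$, every summand of $R_{2s+1}$ is killed by a zero factor, and $a_{2s+1}=0$. The remaining even-index relations uniquely determine each $a_{2s}$ as a polynomial in $a_2,\dots,a_{2s-2}$. Since \eqref{nusa} is satisfied by the Laplace$(\lambda)$ moments $a_{2k+1}=0$, $a_{2k}=(2k)!/\lambda^{2k}$ with $a_2=2/\lambda^2$, the same recurrence together with the same initial datum forces every moment of $X$ to coincide with that of Laplace$(\lambda)$ for $\lambda:=\sqrt{2/a_2}$ whenever $a_2>0$. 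The Laplace distribution satisfies Cram\'er's condition and is therefore determined by its moments, so $X\sim\text{Laplace}(\lambda)$. If $a_2=0$, then $E(X^2)=0$ and $X=0$ almost surely.
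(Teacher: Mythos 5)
Your proposal follows the same architecture as the proof the paper has in mind for this theorem (which it omits, deferring to the proof of Theorem~\ref{yate}): expand both sides of \eqref{nusa} at the origin, convert the identity into the recursion $(c_m-p_m)a_m=R_m(a_1,\dots,a_{m-1})$, observe that $m=2$ is the degenerate index that leaves the scale parameter free, and conclude by matching the resulting moment sequence with that of a Laplace law. Your identification $c_{2s}=h_s(\mu_1^2,\dots,\mu_n^2)$ is exactly the computation of Lemma~\ref{lama} transplanted to the Laplace setting, your parity induction for the vanishing of the odd moments is a correct and more explicit version of what the paper's inductive scheme does implicitly, and your closing appeal to moment determinacy of the Laplace distribution is, if anything, cleaner than the paper's appeal to analyticity of $\varphi$.

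The genuine gap is the non-degeneracy $c_m\neq p_m$ for \emph{odd} $m$, including $m=1$, which you need before the induction can even start (it is what forces $a_1=0$). For even $m\ge 4$ your inequality $h_s(\mu_1^2,\dots,\mu_n^2)>\sum_k\mu_k^{2s}$ is complete and correct, but for odd $m$ you only announce a plan (``I would show\dots'') and verify $n=2$ (where, incidentally, the constant term of your numerator should be $-\mu_1\mu_2/(\mu_1+\mu_2)$, not $+$). The difficulty here is real and not merely bookkeeping: writing $x_k=\mu_k^2$, one has $c_{2s+1}=\sum_k x_k^{\,n+s-1/2}\big/\prod_{j\neq k}(x_k-x_j)$, a divided difference of the half-integer power $x\mapsto x^{\,n+s-1/2}$ at the nodes $x_1,\dots,x_n$; this is not a polynomial symmetric function of the $x_k$, so the ``all monomials appear with positive coefficients'' argument that settles the even case does not apply, and the sign-tracking you sketch actually has to be carried out for general $n$. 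This missing statement is precisely the content of the lemma the paper imports wholesale from Lemma~2-(iii) of \cite{yanev}. In short: same route as the paper, with the routine steps done more carefully than the paper bothers to, but the single nontrivial lemma is asserted rather than proved.
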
  
The result is closely related to the one stated in Theorem~\ref{yate} because $X\in \call$ implies that for a suitable $Y\in\cale,$
\beq
E\big(e^{it X}\big)=\frac{1}{2}\Big(E\big(e^{it Y}\big)+E\big(e^{-it Y}\big)\Big).
\feq 
The proof of the theorem is similar to that of Theorem~\ref{yate}, and therefore is omitted. The key technical ingredient of the proof, namely an analogue of Lemma~\ref{lama} for Laplace distributions, follows immediately from Lemma~2-(iii) in \cite{yanev}, and the rest of the proof of Theorem~\ref{yate} can be carried over verbatim to the double exponential setup of Theorem~\ref{yan}.
\par
We conclude the introduction with a brief discussion of condition \eqref{wd}. The equality with $n=2$ and some $m\geq 2$ reads 
$\sum_{j=0}^m \mu_1^j\mu_2^{m-j}=\mu_1^m+\mu_2^m, $ which is equivalent to $\frac{\mu_1^{m+1}-\mu_2^{m+1}}{\mu_1-\mu_2}=\mu_1^m+\mu_2^m.$ 
The latter implies that $\mu_2^{m-1}=\mu_1^{m-1},$ and hence $m$ is odd and $\mu_2=-\mu_1.$ In that case, \eqref{nusa} becomes
\beqn
\label{phisa}
\varphi(t)\varphi(-t)=\frac{1}{2}\big(\varphi(t)+\varphi(-t)\big), \qquad t\in \rr.
\feqn 
The equation is satisfied when $X$ is a Bernoulli random variable with $P(X=0)=P(X=a)=\frac{1}{2}$ for some constant $a>0,$ 
in which case $\varphi(t)=\frac{1}{2}\big(1+e^{iat}\big).$ More generally, \eqref{phisa} holds if and only if $\varphi(t)=\frac{1}{2}\big(1+e^{i\rho(t)}\big),$ where $\rho:\rr\to\rr$ is an odd function. 
Unfortunately, we are not aware of any example where $\varphi$ in this form would be a characteristic function of a random variable beyond the linear case $\rho(t)=at$ and linear fractional $\rho(t)=\frac{\lambda-ti}{\lambda+ti},$ $\rho(t)=\frac{\lambda+ti}{\lambda-ti}$ which correspond to, respectively, $X\in \cale(\lambda)$ and $-X\in \cale(\lambda).$ 
\par 
Our proof technique differs significantly from the one used in \cite{yanev}. However, interestingly enough, both rely on the validity of \eqref{core}. Nevertheless, we believe that the following might be true:
\begin{conj*} 
For $n\geq 3,$ \eqref{core} is an artifact of the proof and is not necessary.
\end{conj*}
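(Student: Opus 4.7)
My plan is to attack the conjecture by re-examining the Taylor-coefficient recursion that \eqref{nusa} induces on the moments of $X$ and showing that for $n\geq 3$ these moments remain uniquely determined even if \eqref{core} fails at some index $m_0\geq 2$.

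First, I would expand both sides of \eqref{nusa} as formal power series at $t=0$. Writing $m_j:=E(X^j)$, letting $h_m$ and $p_m$ denote the complete homogeneous and power-sum symmetric polynomials in $(\mu_1,\ldots,\mu_n)$, and using the classical identity $\sum_{k}\theta_k\mu_k^m = h_m(\mu_1,\ldots,\mu_n)$ (which is precisely why the coefficients \eqref{theta} are natural on the right-hand side of \eqref{nusa}), comparing $t^m$-coefficients yields an explicit relation of the form
\[
\big(h_m(\mu)-p_m(\mu)\big)\,\tfrac{i^m}{m!}\,m_m \;=\; G_m\!\big(\mu;\,m_0,m_1,\ldots,m_{m-1}\big),
\]
where $G_m$ is a polynomial in the strictly lower moments arising from the multinomial expansion of the product in \eqref{nusa}. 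Hypothesis \eqref{core} is precisely the statement that the coefficient $h_m(\mu)-p_m(\mu)$ is nonzero for every $m\geq 2$, which lets one solve recursively for $m_m$. If \eqref{core} fails at some $m_0$, the order-$m_0$ equation collapses into a pure constraint on the lower moments and leaves $m_{m_0}$ momentarily undetermined.

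The core of the proposed argument is then to recover $m_{m_0}$ from the equations at orders $m>m_0$. The coefficient of $m_{m_0}$ inside $G_m$ is itself a symmetric polynomial $K_{m,m_0}(\mu)$, and the crucial hope is that for $n\geq 3$, pairwise distinct nonzero $\mu_k$ with $h_{m_0}(\mu)=p_{m_0}(\mu)$, not all $K_{m,m_0}$ with $m>m_0$ can vanish simultaneously. Granting such an algebraic independence, one selects the smallest $m^\ast>m_0$ with $K_{m^\ast,m_0}(\mu)\neq 0$ and uses the equation at order $m^\ast$ to solve for $m_{m_0}$, the other unknowns there being fixed by the earlier non-degenerate stages of the recursion. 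Iterating over any further degenerate indices reconstructs the full moment sequence of an exponential law; combined with infinite differentiability of $\varphi$ at $0$ and Cram\'er's condition, the Hamburger moment problem is then determinate and the conclusion of Theorem~\ref{yate} follows without having invoked \eqref{core}.

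The principal obstacle I anticipate is exactly the non-vanishing claim for these ``secondary'' symmetric polynomials $K_{m,m_0}(\mu)$ when $n\geq 3$. The discussion following \eqref{phisa} furnishes a genuine $n=2$ counterexample, so any proof must use $n\geq 3$ in an essential way; isolating the symmetric-function identity that is obstructed for $n=2$ but survives for all larger $n$ is, in my view, the crux of the conjecture. Should the formal recursion prove too rigid, a complementary route worth pursuing is analytic: impose Cram\'er's condition so that $\varphi$ extends meromorphically to a horizontal strip, read \eqref{nusa} as an identity of meromorphic functions, and compare pole locations, orders and residues on both sides. For $n\geq 3$ this system is heuristically over-determined in the meromorphic category, which leads me to expect that a direct complex-analytic rigidity argument can force $\varphi(t)=\lambda/(\lambda-it)$ unconditionally, again bypassing \eqref{core}.
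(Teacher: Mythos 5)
The statement you are addressing is posed in the paper as an open conjecture: the authors offer no proof of it, so there is no ``paper's own proof'' to compare against, only the proof of Theorem~\ref{yate}, whose Taylor-coefficient recursion you have reconstructed correctly. Your reformulation of \eqref{core} as $h_m(\mu)\neq p_m(\mu)$ (complete homogeneous versus power-sum symmetric polynomials), via the identity $\sum_{k}\theta_k\mu_k^m=h_m(\mu)$, is accurate and is a clean way to see what the hypothesis does: it makes the coefficient of the top moment in the order-$m$ equation nonzero so that the recursion closes.

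However, what you have written is a program, not a proof, and the gap sits exactly where you yourself place it. The claim that for $n\geq 3$ the coefficients $K_{m,m_0}(\mu)$ cannot all vanish on the variety $h_{m_0}(\mu)=p_{m_0}(\mu)$ is asserted as a ``hope'' and never argued; without it nothing follows. Even granting it, the scheme has further unexamined difficulties: for $m^\ast\geq 2m_0$ the order-$m^\ast$ equation is no longer linear in $m_{m_0}$ (the multinomial expansion contains terms with two indices equal to $m_0$), so ``solving for $m_{m_0}$'' may yield several roots and the spurious ones must be excluded; if several indices are degenerate, a single higher-order equation can involve several undetermined moments simultaneously, so the clean one-at-a-time induction breaks down; and the coefficient of $m_{m_0}$ at order $m^\ast$ is not a symmetric function of $\mu$ alone but is multiplied by lower moments of the unknown distribution (e.g.\ a factor proportional to $m_1$ at order $m_0+1$), so its nonvanishing cannot be settled purely algebraically in $\mu$. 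The alternative analytic route is likewise only heuristic: Cram\'er's condition gives analyticity of $\varphi$ in a strip, not meromorphy on a domain where poles could be compared, and ``over-determined in the meromorphic category'' is not an argument. The conjecture therefore remains open after your proposal; what you do have is a worthwhile reduction of it to a concrete non-degeneracy question about the recursion, but not a resolution.
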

\section{Proof of Theorem~\ref{yate}}
\label{proofs}
The following is a suitable version of Lemma~2-(iii) in \cite{yanev}.
\begin{lemma}
\label{lama}
Assume \eqref{core}. Then, for any integer $m\geq 2,$
\beq
\sum_{k=1}^n \theta_k \mu_k^m \neq \sum_{k=1}^n \mu_k^m.
\feq
\end{lemma}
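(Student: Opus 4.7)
The plan is to recognize that $\sum_{k=1}^n\theta_k\mu_k^m$ is exactly the complete homogeneous symmetric polynomial $h_m(\mu_1,\ldots,\mu_n)$, so that the claimed inequality is nothing other than assumption \eqref{core} after a cosmetic rewriting.

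The key algebraic identity I would prove is
\beq
\sum_{k=1}^n\theta_k\mu_k^m \;=\; \sum_{(k_1,\ldots,k_n)\in W_{n,m}}\prod_{j=1}^n\mu_j^{k_j}, \qquad m\geq 0,
\feq
via the generating function
\beq
G(t):=\prod_{k=1}^n\frac{1}{1-\mu_k t}.
\feq
Expanding each factor as a geometric series and collecting by total degree gives
$G(t)=\sum_{m\geq 0} h_m(\mu_1,\ldots,\mu_n)\,t^m$, where $h_m(\mu_1,\ldots,\mu_n)$ is the right-hand side above. On the other hand, since $\mu_1,\ldots,\mu_n$ are distinct, the partial-fraction expansion of $G(t)$ takes the form $G(t)=\sum_{k=1}^n \frac{a_k}{1-\mu_k t}$, and a direct residue computation at $t=1/\mu_k$ yields $a_k=\prod_{j\neq k}\frac{\mu_k}{\mu_k-\mu_j}=\theta_k$. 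Expanding each resulting geometric series again gives $G(t)=\sum_{m\geq 0}\big(\sum_{k=1}^n \theta_k\mu_k^m\big) t^m$, and matching the coefficients of $t^m$ produces the identity.

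With this identity at hand, the lemma is immediate: combining it with assumption \eqref{core} gives, for every integer $m\geq 2$,
\beq
\sum_{k=1}^n\theta_k\mu_k^m \;=\; \sum_{(k_1,\ldots,k_n)\in W_{n,m}}\prod_{j=1}^n\mu_j^{k_j} \;\neq\; \sum_{k=1}^n\mu_k^m,
\feq
as required.

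There is essentially no hard step; the only delicate point is the bookkeeping in the partial-fraction computation that identifies the residues of $G(t)$ with the coefficients $\theta_k$ defined in \eqref{theta}. If one wished to avoid generating functions, one could instead argue directly that $\sum_{k}\theta_k\mu_k^m$ is a symmetric polynomial in $\mu_1,\ldots,\mu_n$ of total degree $m$ (the apparent poles at $\mu_k=\mu_j$ cancel), compute it explicitly by induction on $m$ using the recursion $h_m=\sum_k \mu_k\,h_{m-1}^{(\hat k)}$ (or by Lagrange interpolation applied to the polynomial $x^{m+n-1}$), and arrive at the same identification with $h_m(\mu_1,\ldots,\mu_n)$.
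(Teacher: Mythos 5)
Your proof is correct. It establishes exactly the same key identity as the paper, namely
\beq
\sum_{k=1}^n\theta_k\mu_k^m=\sum_{(k_1,\ldots,k_n)\in W_{n,m}}\prod_{j=1}^n\mu_j^{k_j},
\feq
after which the lemma is indeed just a restatement of \eqref{core}. The difference is in how the identity is obtained. The paper argues probabilistically: it takes $X\in\cale(\lambda)$, uses the known fact that \eqref{nusa} holds for exponential variables, and computes $E(S^m)$ for $S=\mu_1X_1+\cdots+\mu_nX_n$ in two ways (once via \eqref{nusa}, once via the multinomial expansion), with the exponential moments $E(X^k)=k!/\lambda^k$ making the factorials cancel. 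You instead expand the generating function $G(t)=\prod_k(1-\mu_kt)^{-1}$ in two ways, once as a product of geometric series and once through its partial-fraction decomposition, whose residues are precisely the $\theta_k$ of \eqref{theta}. These are really two faces of the same computation: $G$ is the moment generating function of $S$ when the $X_i$ are standard exponentials, and the partial-fraction decomposition of $G$ is exactly \eqref{nusa} specialized to $X\in\cale$, a point the paper itself makes in the introduction. What your route buys is self-containment: it does not presuppose the direct half of the characterization (that exponential variables satisfy \eqref{nusa}), and it is purely algebraic, so it makes transparent that the lemma is a statement about the complete homogeneous symmetric polynomials with no probabilistic content. The one bookkeeping step you flag, the residue computation $a_k=\prod_{j\neq k}\mu_k/(\mu_k-\mu_j)=\theta_k$, is routine and valid since the $\mu_k$ are distinct and non-zero, so the poles of $G$ are simple.
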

\begin{proof}[Proof of Lemma~\ref{lama}] 
Assume $X\in \cale$ and recall $S$ from \eqref{es}. It follows from \eqref{nusa} that 
\beq
E(S^m)=\sum_{k=1}^n \theta_k \mu_k^m E\big(X_1^m\big)=\frac{m!}{\lambda^m}\sum_{k=1}^n \theta_k \mu_k^m,
\feq 
and hence, with $W_{n,m}$ introduced in \eqref{wi}, we have:
\beqn
\nonumber 
\sum_{k=1}^n \theta_k \mu_k^m&=&\frac{\lambda^m}{m!}E(S^m)=\sum_{(k_1,\ldots,k_n)\in W_{n,m}}\frac{\lambda^m}{k_1!\cdots k_n!}
\prod_{j=1}^n \mu_j^{k_j} E\big(X_j^{k_j}\big)
\\
\label{wd}
&=&\sum_{(k_1,\ldots,k_n)\in W_{n,m}}\prod_{j=1}^n \mu_j^{k_j}, 
\feqn 
which yields the result in view of \eqref{core}.
\end{proof}
Differentiating both sides of \eqref{nusa} $m$ times we obtain the identity
\beqn
\label{ida}
\frac{d^m}{dt^m}\prod_{k=1}^n \varphi(\mu_k t)\,\bigg|_{t=0}=\sum_{k=1}^n \theta_k \mu_k^m \varphi^{(m)}(0),\qquad m\geq 2.
\feqn 
In view of Lemma~\ref{lama} and the fact that $\varphi(0)=1,$ these identities can be used 
to determine all the derivatives of $\varphi$ at zero in terms of $\varphi'(0),$ first $\varphi''(0)$ in terms of the parameter $\varphi'(0),$ then $\varphi'''(0)$ in terms of $\varphi'(0)$ and $\varphi''(0),$ and hence in terms of $\varphi'(0)$ only, and so on. For instance, \eqref{ida} with $m=2$ yields
\beq 
\varphi''(0)\sum_{k=1}^n \mu_k^2(\theta_k-1)=\varphi'(0)\sum_{k=1}^{n-1}\sum_{j=i+1}^n \mu_i\mu_j.
\feq 
Let now $Z\in \cale$ and $\psi(t)=E\big(e^{itZ}\big).$ The derivatives $\psi''(0), \psi'''(0),\ldots$ as functions of the parameter $\psi'(0)$ can be in principle derived using the same inductive algorithm. Therefore, $\varphi'(0)=0$ implies $P(X=0)=1$ while $\varphi'(0)=\psi'(0)=\lambda^{-1}$ for some $\lambda>0$ implies that $\varphi^{(m)}(0)=\psi^{(m)}(0)$ for all $m\in\nn,$ and hence (since $\varphi$ is analytic under the conditions of the theorem) $\varphi(t)=\psi(t)=\frac{\lambda}{\lambda-it}$ as desired.
Finally, the case $\varphi(0)=-\lambda^{-1}<0$  can be reduced to the previous one by switching from $X$ to $-X$ in the above argument. \hfill\hfill \qed
	
	{\small
		
	}
\end{document}